\title{Elliptic Reciprocity} 
\author{Liljana Babinkostova$^{1\S}$, Kevin Bombardier$^{2}$, Matthew Cole$^{3}$, Thomas Morrell$^{4}$, and Cory Scott$^{5}$}
\thanks{Supported by National Science Foundation grant DMS 1062857}
\thanks{$^{\S}$ Corresponding Author: liljanababinkostova@boisestate.edu}
\subjclass[2010]{14H52, 14K22, 11G07, 11G20, 11B99} 
\keywords{Elliptic curve, elliptic pair, elliptic list, elliptic cycle, elliptic reciprocity} 
\begin{document}

\begin{abstract}
The paper introduces the notions of an elliptic pair, an elliptic cycle and an elliptic list over a square free positive integer $d$. These concepts are related to the notions of amicable pairs of primes and aliquot cycles that were introduced by Silverman and Stange in \cite{SS1}. Settling a matter left open by Silverman and Stange it is shown that for $d=3$ there are elliptic cycles of length 6. For $d\neq 3$ the question of the existence of proper elliptic lists of length $n$ over $d$ is reduced to the the theory of prime producing quadratic polynomials. For $d=163$ a proper elliptic list of length 40 is exhibited. It is shown that for each $d$ there is an upper bound on the length of a proper elliptic list over $d$. The final section of the paper contains heuristic arguments supporting conjectured asymptotics for the number of elliptic pairs below integer $X$. Finally, for $d\equiv_8 3$ the existence of infinitely many anomalous prime numbers is derived from Bounyakowski's Conjecture for quadratic polynomials.
\end{abstract}

\maketitle

\section{Introduction}\label{Intro}

Throughout this paper $d$ will denote a positive square-free integer, and prime numbers $p$ and $q$ will be larger than $3$. Consider an elliptic curve group $E$ over the rational numbers. Suppose that for some fundamental discriminant $d$ the group $E$ has complex multiplication in $\mathbb{Q}(\sqrt{-d})$: If $E$ has good reduction over a finite field $K$ then the order of the corresponding elliptic curve group is computed from $\vert K\vert$ and $d$. Depending on the value of $d$ there are two, four or six possible values for this order. Some of these values may be prime. We consider the problem of finding a prime number $p$ and a fundamental discriminant $d$ such that a corresponding elliptic curve group over the field ${\mathbb F}_p$ has prime order $q$. 

In Section \ref{EPair} we examine for a given value of $d$ ordered pairs $(p,q)$ of prime numbers related as above, and introduce the notion of an \emph{elliptic pair over d}. Although the definition of the binary relation of being an elliptic pair over $d$ does not appear to introduce a symmetric relation, we prove that this relation is in fact symmetric (Theorem \ref{TheLawOfEllipticReciprocity}), a fact we refer to as ``the law of elliptic reciprocity". The relation of being an elliptic pair imposes constraints on the prime numbers so related. Our first result in this direction is given in Theorem \ref{abRelationDnot3} for the case when $d$ is not $3$, returning to the case $d=3$ in a later section. We also show that a special integer $A$ identified by Silverman and Stange for the $d=3$ case has an analogue for other values of $d$ (Theorem \ref{SilStaCor24} and Corollary \ref{AabCorollary}). This integer is useful for ruling out pairs of prime numbers that are not elliptic pairs over a given $d$, and it is also useful in identifying the $d$ over which a given pair of prime numbers could be an elliptic pair. 

The case when $d$ is $3$ in several instances has to be handled separately. Though Section \ref{EListECycle} contains results for general $d$, it is dominated by results about $d=3$. In this section we take up the question of how long directed paths (elliptic lists) or cycles (elliptic cycles) can be found in the ``elliptic pair over $d$" relation. First we establish the special place of $d=3$ in this question. If for a $d$ there is a proper elliptic cycle of length $n>3$ over $d$, then $d=3$ and $n=6$ (Theorem \ref{OnlyProper6Cycle}). And then we proceed, using several additional constraints on elliptic cycles for $d=3$ derived in this section, to search of elliptic cycles over $d=3$ not ruled out by our results. One of the main computational results of this section is finding a proper elliptic 6-cycle over $d=3$. Corollaries (\ref{4ListTwice}) and (\ref{7List}) and Theorem (\ref{6CycleMod7}) provide information that makes the search for elliptic $6$-cycles over $3$ more efficient than a brute force search. Our findings in this section clarify points left open in Remark 6.5 on \cite{SS1}: In particular, there are proper elliptic cycles of length larger than $3$ when $d=3$. 

Having determined in Section \ref{EListECycle} that for $d=3$ proper elliptic lists and proper elliptic cycles over $3$ of length 6 exist, and that this is the optimal length, and having determined that for $d\neq 3$ proper elliptic cycles cannot have length larger than $2$ we turn to the question of the length of proper elliptic lists for $d\neq 3$ in Section \ref{SecEListDNot3}. In Theorem \ref{PrimePolysDnot3} we connect the problem of existence of proper elliptic lists of length $n$ over $d\neq 3$ with quadratic polynomials generating $n$ consecutive prime values. Using a classical prime generating quadratic polynomial of Euler we exhibit a proper elliptic list of length 40 over $d=163$. In Theorem \ref{MinusDQNR} and Corollary \ref{NoEListEnot19Mod24} we derive upper bounds on the length a proper elliptic list over $d\neq 3$ can have. We do not have any indications on whether this upper bound is optimal. In anticipation of further research on this topic, we introduce three functions defined on the set of squarefree positive integers: $\mathcal{M}$, the ``maximal allowable list length" function, $\mathcal{L}$, the ``maximal achievable list length" function, and $f$, the function that measures the discrepancy between $\mathcal{M}$ and $\mathcal{L}$. On very meager evidence we suspect that this discrepancy function may be computable in terms of the class function $h$ - i.e., the values $f(d)$ may be related to the values $h(-d)$.

In the final section, Section \ref{ConjRelations}, we give heuristic arguments to support a conjecture on an asymptotic estimate in terms of $d$ and $X$ for the number of elliptic pairs over $d$ occurring below an integer $X$. Finally, in Theorem \ref{Bouniakowsky}, we derive for $d\equiv_8 3$ the existence of infinitely many anomalous primes from Bouniakowsky's Conjecture for quadratic polynomials.
  
The Appendix contains a proof of Theorem \ref{OrderJ1}, and two tables of data supporting some of our work.

\section{Elliptic Pairs}\label{EPair}

Because fields of characteristic $2$ and $3$ often must be considered separately when dealing with elliptic curves, we restrict our paper to prime numbers larger than $3$. From now on, all prime numbers $p$ and $q$ are without further comment assumed to be larger than $3$. 

\newtheorem{DefEPair}{Definition}[section]
\begin{DefEPair}\label{DefEPair} 
For a square-free positive integer $d$, an ordered pair $(p,\,q)$ of prime numbers is said to be an \textbf{elliptic pair over $d$} if there exists an elliptic curve $E$ with complex multiplication (CM) in $\mathbb{Q}(\sqrt{-d})$ having order $q$ when examined over $\mathbb{F}_{p}$. The notation $(p,\, q)_{d}$ denotes the fact that the ordered pair of prime numbers $(p,\, q)$ is an elliptic pair over $d$. The witnessing curve $E$ is called a \textbf{representative curve} of the pair. 
\end{DefEPair}

For example, $(7,13)_{3}$ is an elliptic pair because the representative curve $E : y^2 = x^3 + 3$ has order $13$ when considered over the field ${\mathbb F}_7$. 

It should be noted that Definition \ref{DefEPair} as given is not a symmetric relation in $p$ and $q$. Theorem \ref{TheLawOfEllipticReciprocity} below addresses this point.

\newtheorem{DefEPrime}[DefEPair]{Definition}
\begin{DefEPrime}\label{DefEPrime}
An \textbf{elliptic prime (over $d$)} is a prime number which is the first entry in an elliptic pair over $d$.
\end{DefEPrime}
For example, in Definition \ref{DefEPair}, $p$ is an elliptic prime over $d$. In the example following Definition \ref{DefEPair}, $7$ is an elliptic prime over $3$.

\newtheorem{OrderJ0}[DefEPair]{Lemma}
\begin{OrderJ0}\label{OrderJ0}\label{drestrict} If the prime number $p$ is an elliptic prime over $d$, and if $d\, \equiv_{4} 3$, then $d \equiv_{8} 3$.
\end{OrderJ0} 
\begin{proof}
Let $d \equiv_{4} 3$, be a given fundamental discriminant, and let $p$ be an elliptic prime number over $d$. We may assume $d>3$. Assuming that $p$ is not prime in $\mathbb{Z}(\sqrt{-d})$, let $a$ and $b$ be the unique positive integers such that $4p = a^2 + db^2$. 

Let $E$ be a representative curve witnessing that $p$ is an elliptic prime over $d$. If $E$ has CM in $\mathbb{Q}(\sqrt{-d})$, then $\#E(\mathbb{F}_{p}) = p+1 \pm a$. Then, as $p+1$ is even and $\#E(\mathbb{F}_p)$ is a prime number, $a$ must be odd. If it were the case that $d \equiv_{8} 7$, then $4 \equiv_{8} 4p = a^2 + db^2 \equiv_{8} a^2 - b^2$. But since $a$ is odd, $a^2 \equiv_8 1$, implying $b^2 \equiv_8 5$. But the congruence $x^2 \equiv_8 5$ does not have a solution, a contradiction. Thus, as $d\, \equiv_4 3$ we must have $d\,\equiv_8 3$.
\end{proof}

Among the fundamental discriminants $d$ with $d\equiv_{4} 3$, the case of $d = 3$ has to be treated separately. The curves $E$ with CM in $\mathbb{Q}(\sqrt{-3})$ are precisely those with $j(E) \in \{ 0, 54000, -12288000 \}$ \cite{AECII}. Curves with $j(E) = 54000$ have trivial 2-torsion, but some curves with $j(E) = 0, -12288000$ have trivial torsion (over $\mathbb{Q}$), so we must consider those. Curves with $j = -12288000$ are covered fully in \cite{SS1}, and they can be treated in exactly the same way as other curves with CM, so we omit a full discussion of them here. When $j(E) = 0$, $E$ has the form $E : y^2 = x^3 + k$ for $k \in \mathbb{Z}$, $\gcd(k,p)=1$. Now, $p = a^2 + 3b^2$ and $\#E(\mathbb{F}_{p})$ takes on one of six values:

\newtheorem{OrderJ1}[DefEPair]{Theorem}
\begin{OrderJ1}\label{OrderJ1}
Let $p>3$ be an odd prime and let $k \not\equiv_{p} 0$. Let $N_{p}$ denote $\#E(\mathbb{F}_{p})$, where $E$ is the elliptic curve $y^2 = x^3 + k$. Here, QR and CR denote quadratic residue and cubic residue, respectively.
\begin{enumerate}
\item[(a)] If $p \equiv_{3} 2$, then $N_{p} = p+1$.
\item[(b)] If $p \equiv_{3} 1$, write $p = a^2 + 3b^2$,
 where $a,b$ are integers with $b$ positive and $a \equiv_{3} -1$. Then \[ N_{p} = \begin{cases} p+1+2a & \text{if } k \text{ is a sixth power mod } p \\ p+1-2a & \text{if } k \text{ is a CR, but not a QR, mod } p \\ p+1-a \pm 3b & \text{if } k \text{ is a QR, but not a CR, mod } p \\ p+1+a \pm 3b & \text{if } k \text{ is neither a QR nor a CR mod } p. \end{cases} \]
\end{enumerate}
\end{OrderJ1}
\begin{proof}
This result is well-known (most previous authors formulate this theorem using $p = a^2 - ab + b^2$ instead); the proof is entirely computational and is contained in Appendix \ref{OrderJ0Appendix}. The end result is that $N_{p} = p+1 - \pi - \bar{\pi}$, where $\pi = -\chi_{6}(k)^{-1} J(\chi_{2},\chi_{3})$. This Jacobi sum is evaluated as $J(\chi_{2},\chi_{3}) = a + ib\sqrt{3}$, with $a \equiv_{3} -1$ \cite{BEW1}.
\end{proof}

It immediately follows that if $(p,q)_{3}$ is an elliptic pair, then $p \equiv_{3} 1$. Note also that if $k$ is a QR modulo $p$ then $3|N_{p}$, and if $k$ is a CR modulo $p$ then $2|N_{p}$. From this observation, two corollaries follow immediately:

\newtheorem{PrimeOrder}[DefEPair]{Corollary}
\begin{PrimeOrder}\label{PrimeOrder}
Let $p \equiv_{3} 1$ be a prime and let $E : y^2 = x^3 + k$ be an elliptic curve with $k \not\equiv_{p} 0$. If $\#E(\mathbb{F}_{p})$ is prime, then $k$ is neither a QR nor CR, except in the case where $p = 7$ and $k \equiv_7 4$ (in which case $N_{7} = 3$).
\end{PrimeOrder}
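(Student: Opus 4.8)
The plan is to combine Theorem~\ref{OrderJ1} with the two divisibility observations recorded just before the statement: for $p\equiv_3 1$, if $k$ is a QR mod $p$ then $3\mid N_p$, and if $k$ is a CR mod $p$ then $2\mid N_p$. (Both drop out of Theorem~\ref{OrderJ1}(b): reducing the two QR formulas mod $3$ using $p+1\equiv_3 2$, $a\equiv_3 -1$, and $3b\equiv_3 0$ gives $N_p\equiv_3 0$; reducing the two CR formulas mod $2$ using the parity of $p+1$ and of $2a$ gives $N_p\equiv_2 0$.) I would then run a short case analysis on whether $k$ is a cubic residue and whether it is a quadratic residue, using the Hasse bound to pin down the small primes that can arise.

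First suppose $k$ is a cubic residue. Then $2\mid N_p$, and since $N_p$ is assumed prime we must have $N_p=2$. But the Hasse bound gives $N_p\ge p+1-2\sqrt p=(\sqrt p-1)^2$, and every prime $p\equiv_3 1$ satisfies $p\ge 7$, so $N_p\ge(\sqrt 7-1)^2>2$, a contradiction. Hence $k$ is never a cubic residue when $N_p$ is prime. In particular the sixth-power case of Theorem~\ref{OrderJ1}(b) cannot produce a prime, consistent with the fact that it would force both $2\mid N_p$ and $3\mid N_p$.

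Next suppose $k$ is a quadratic residue. Then $3\mid N_p$, forcing $N_p=3$, and the Hasse bound now reads $3\ge(\sqrt p-1)^2$, i.e. $p\le 4+2\sqrt 3<8$; together with $p\equiv_3 1$ and $p$ prime this leaves only $p=7$. By the previous paragraph $k$ is a QR but not a CR, so I invoke the third line of Theorem~\ref{OrderJ1}(b): writing $7=a^2+3b^2$ with $a\equiv_3 -1$ forces $a=2,\,b=1$, whence $N_7=8-2\pm 3\in\{3,9\}$. A direct point count of $y^2=x^3+k$ over $\mathbb{F}_7$ for the two QR-non-CR residues $k\equiv_7 2,4$ shows $k\equiv_7 4$ yields $N_7=3$ while $k\equiv_7 2$ yields $N_7=9$, so the only prime value arises exactly at the claimed exception. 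Combining the two cases, whenever $N_p$ is prime and $(p,k)\ne(7,4\bmod 7)$, $k$ is neither a QR nor a CR.

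The step requiring the most care is the boundary case $p=7$: it is the Hasse inequality that collapses the QR possibility down to the single prime $p=7$, and one must then verify by hand which residue class of $k$ selects the minus sign (and hence the order $3$) rather than the plus sign (order $9$). The rest is routine reduction of the formulas in Theorem~\ref{OrderJ1}, so I expect no serious obstacle beyond bookkeeping.
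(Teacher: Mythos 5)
Your proposal is correct and follows exactly the route the paper intends: the corollary is stated as an immediate consequence of the observation preceding it (QR $\Rightarrow 3\mid N_p$, CR $\Rightarrow 2\mid N_p$, both read off from Theorem \ref{OrderJ1}(b)), and your use of the Hasse bound to force $N_p\in\{2,3\}$ and isolate $p=7$, $k\equiv_7 4$ simply supplies the routine details the paper leaves unstated. No gaps.
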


\newtheorem{NumberPrimeOrder}[DefEPair]{Corollary}
\begin{NumberPrimeOrder}\label{NumberPrimeOrder}
Let $p > 7$ be a prime, and let $E : y^2 = x^3 + k$ be an elliptic curve. Then $\#E(\mathbb{F}_{p})$ can take on at most $2$ prime values, both of which are equivalent to $1 \mod{3}$.
\end{NumberPrimeOrder}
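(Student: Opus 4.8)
The plan is to read the possible values of $\#E(\mathbb{F}_{p})$ directly off Theorem \ref{OrderJ1} and then use Corollary \ref{PrimeOrder} to discard all but two of them. First I would split on the residue of $p$ modulo $3$. If $p \equiv_{3} 2$, then Theorem \ref{OrderJ1} forces $\#E(\mathbb{F}_{p}) = p+1$ for every admissible $k$, so there is only a single value to examine; since $p \equiv_{3} 2$ gives $3 \mid (p+1)$ and $p > 7$ gives $p+1 > 3$, this value is composite. Hence in this case there are no prime values at all, and the statement (including the now-vacuous assertion about the residue mod $3$) holds trivially.

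The substantive case is $p \equiv_{3} 1$. Here Theorem \ref{OrderJ1} lists six possible values of $\#E(\mathbb{F}_{p})$, grouped by the residue character of $k$ (sixth power; cubic but not quadratic; quadratic but not cubic; neither). The key step is to invoke Corollary \ref{PrimeOrder}: because $p > 7$, any prime value of $\#E(\mathbb{F}_{p})$ forces $k$ to be neither a QR nor a CR modulo $p$. Consequently every prime value must equal one of the two numbers $p+1+a+3b$ and $p+1+a-3b$, so at most two of the six possible orders can be prime. These two numbers are distinct since $b>0$, but even a coincidence would only strengthen the bound.

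It then remains to verify the congruence on precisely these two candidates. Using $p \equiv_{3} 1$, the normalization $a \equiv_{3} -1$ supplied by Theorem \ref{OrderJ1}, and $3b \equiv_{3} 0$, I compute $p+1+a\pm 3b \equiv_{3} 1 + 1 - 1 \equiv_{3} 1$, so both surviving values are congruent to $1$ modulo $3$. I do not anticipate a real obstacle: the whole statement follows once one observes that Corollary \ref{PrimeOrder} collapses the six cases of Theorem \ref{OrderJ1} to the single pair $p+1+a\pm 3b$. The only points needing care are the bookkeeping in the $p \equiv_{3} 2$ case, where one must note that the sole value $p+1$ is automatically composite, and the use of the normalization $a \equiv_{3} -1$, which is exactly what makes the final residue come out to $1$ rather than something else.
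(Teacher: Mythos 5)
Your proof is correct and follows exactly the route the paper intends: the paper derives this corollary "immediately" from the observation that $k$ a QR forces $3\mid N_p$ and $k$ a CR forces $2\mid N_p$, which (via Corollary \ref{PrimeOrder}) leaves only the two values $p+1+a\pm 3b$, each $\equiv_3 1$ by the normalization $a\equiv_3 -1$. Your added bookkeeping for the $p\equiv_3 2$ case is a harmless and correct completion of the same argument.
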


\newtheorem{TheLawOfEllipticReciprocity}[DefEPair]{Theorem}
\begin{TheLawOfEllipticReciprocity}[The Law of Elliptic Reciprocity]\label{TheLawOfEllipticReciprocity}
If $(p,q)_{d}$ is an elliptic pair, then so too is $(q,p)_{d}$.
\end{TheLawOfEllipticReciprocity}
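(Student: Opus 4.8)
The plan is to translate the hypothesis into a statement about a single algebraic integer in $K=\mathbb{Q}(\sqrt{-d})$, exploit a simple symmetry of norms, and then use the dictionary of complex multiplication to convert the resulting Weil number back into an honest representative curve. First I would record that the reduction in question is ordinary: if a representative curve $E$ for $(p,q)_d$ reduced supersingularly at $p$, then $\#E(\mathbb{F}_p)=p+1$, which is even and larger than $4$ and hence cannot be the prime $q$. Therefore $p$ splits in $K$ and the Frobenius $\pi$ is a non-real element of the CM order $\mathcal{O}=\operatorname{End}(E)\subset K$ with $\pi\bar\pi=p$ and $\pi+\bar\pi=p+1-q$. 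Rewriting the order formula then gives the clean identity $q=p+1-(\pi+\bar\pi)=(1-\pi)(1-\bar\pi)=N_{K/\mathbb{Q}}(1-\pi)$.

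The crux is the involution $\pi\mapsto\rho:=1-\pi$. A one-line computation shows $N_{K/\mathbb{Q}}(\rho)=(1-\pi)(1-\bar\pi)=q$ and $N_{K/\mathbb{Q}}(1-\rho)=N_{K/\mathbb{Q}}(\pi)=p$, while $\mathbb{Z}[\rho]=\mathbb{Z}[1-\pi]=\mathbb{Z}[\pi]$, so $\rho$ is again a non-real Weil $q$-number belonging to the very same order $\mathcal{O}$. Its trace is $\rho+\bar\rho=2-(\pi+\bar\pi)$, so any elliptic curve over $\mathbb{F}_q$ whose Frobenius is $\rho$ has order $q+1-(2-(\pi+\bar\pi))=q-1+(\pi+\bar\pi)=p$. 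Thus the entire content of the theorem is reduced to exhibiting a curve realizing $\rho$.

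For that existence step I would invoke the fundamental dictionary of complex multiplication (Deuring): a non-real Weil $q$-number lying in an imaginary quadratic order is the Frobenius of an elliptic curve over $\mathbb{F}_q$ with complex multiplication by that order. Applied to $\rho$ this produces a representative curve with CM in $\mathbb{Q}(\sqrt{-d})$ of order $p$ over $\mathbb{F}_q$, which is exactly the assertion $(q,p)_d$. The main obstacle is making this last step match the definition of a representative curve precisely: one must ensure the relevant residue field is genuinely $\mathbb{F}_q$ rather than a proper extension (this is the force of the ordinary/split hypothesis extracted at the outset) and that the endomorphism field is $K$ itself and not a larger CM field (forced by $\mathbb{Z}[\rho]=\mathbb{Z}[\pi]\subset K$). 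As a concrete cross-check, for any $d\neq 1,3$ the norm identity reads $4q=(a\pm2)^2+db^2$ whenever $4p=a^2+db^2$ and $q=p+1\pm a$, so $p$ is literally one of the two admissible orders over $\mathbb{F}_q$; and for $d=3$ one can bypass the general existence theory altogether by writing the curve in the form $y^2=x^3+k$ and selecting the residue class of $k$ dictated by Theorem \ref{OrderJ1}.
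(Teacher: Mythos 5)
Your argument is correct, and it takes a genuinely different route from the paper's. The paper disposes of $d\neq 3$ (and of $j\neq 0$ when $d=3$) by citing Theorem 6.1 of \cite{SS1}, and treats $d=3$, $j=0$ separately via the sextic residue criterion of Theorem 7.1(d) of \cite{SS1}, choosing $k$ so that $\left(\frac{4k}{p}\right)_{6}\left(\frac{4k}{q}\right)_{6}=1$. You instead unpack what lies behind that citation: ordinarity (forced because a supersingular reduction would have even order $p+1>4$) puts the Frobenius $\pi$ in an order $\mathcal{O}\subset K=\mathbb{Q}(\sqrt{-d})$ with $q=N_{K/\mathbb{Q}}(1-\pi)$, the involution $\pi\mapsto\rho=1-\pi$ yields a non-real Weil $q$-number in the same order with $N_{K/\mathbb{Q}}(1-\rho)=p$, and Deuring/Honda--Tate realizes $\rho$ as a Frobenius over $\mathbb{F}_q$. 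This buys a single uniform argument with no case split at $j=0$: the elements of norm $q$ in $\mathcal{O}$ are the $u\rho$ and $u\bar\rho$ for units $u$, so the two (resp.\ six) twists over $\mathbb{F}_q$ realize all admissible traces and one of them has order exactly $p$; by contrast the paper's proof is shorter on the page but leans entirely on \cite{SS1}, and in the $j=0$ case tacitly needs the sextic-residue condition at $q$ to be imposable simultaneously with the order-$q$ condition at $p$ (true by CRT, but unstated). The one point you should pin down is the field of definition: Definition \ref{DefEPair}, read with the Introduction, wants the representative curve to come from a curve over $\mathbb{Q}$ with CM in $\mathbb{Q}(\sqrt{-d})$, whereas Deuring hands you a curve over $\mathbb{F}_q$. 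Since the original witness already forces $h(\mathcal{O})=1$, the Hilbert class polynomial is linear, the new curve is a twist of the reduction mod $q$ of the original $\mathbb{Q}$-curve, and the lift is automatic; so this is a cosmetic rather than substantive gap, but it deserves a sentence.
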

\begin{proof}
This is a direct consequence of Theorem 6.1 in \cite{SS1} for $d \neq 3$ and for curves of non-zero $j$-invariant for $d = 3$.

When $d = 3$ and $j = 0$, we use Theorem 7.1(d) in \cite{SS1}, which states that if $E : y^2 = x^3 + k$ has prime order $q$ modulo $p$, then $E$ has order $p$ modulo $q$ if and only if \[ \left(\frac{4k}{p}\right)_{6}\left(\frac{4k}{q}\right)_{6} = 1, \] where $\left(\frac{\cdot}{\cdot}\right)_{6}$ is the sextic residue character. Since $1/6^\text{th}$ of all $4k$ coprime to $p_{0} \neq 2,3$ have $\left(\frac{4k}{p_{0}}\right)_{6} = \frac{1 \pm i\sqrt{3}}{2}$ for a particular choice of sign, we can choose $k$ such that $\left(\frac{4k}{p}\right)_{6} = \frac{1 + i\sqrt{3}}{2}$ and $\left(\frac{4k}{q}\right)_{6} = \frac{1 - i\sqrt{3}}{2}$, so $\left(\frac{4k}{p}\right)_{6}\left(\frac{4k}{q}\right)_{6} = 1$. This demonstrates that if $(p,q)_{3}$ is an elliptic pair, then so too is $(q,p)_{3}$.
\end{proof}

One consequence of Theorem \ref{TheLawOfEllipticReciprocity} is the following:

\newtheorem{abRelationDnot3}[DefEPair]{Theorem}
\begin{abRelationDnot3}\label{abRelationDnot3}
Let $(p,q)_{d}$ be an elliptic pair with $p < q$ and $j$ of the representative curve non-zero, and $a_{p},b_{p},a_{q},b_{q}$ be the unique positive integers such that $4p = a_{p}^{2} + db_{p}^{2}$ and $4q = a_{q}^{2} + db_{q}^{2}$. Then $a_{q} = a_{p} + 2$ and $b_{q} = b_{p}$.
\end{abRelationDnot3}
\begin{proof}
We know that $q = p+1+a_{p}$ and by Theorem \ref{TheLawOfEllipticReciprocity} that $p = q+1-a_{q}$, so $a_{q} = a_{p} + 2$. Then because $\frac{1}{4}(a_{p}^{2} + db_{p}^{2}) = p$ and $\frac{1}{4}(a_{q}^{2} + db_{q}^{2}) = q$, \begin{align*} \frac{a_{p}^{2}+db_{p}^{2}}{4} + 1 + a_{p} &= \frac{a_{q}^{2}+db_{q}^{2}}{4}, \\ (a_{p}+2)^{2} + db_{p}^{2} &= a_{q}^{2} + db_{q}^{2}, \end{align*} from which is follows that $b_{q} = b_{p}$.
\end{proof}

The corresponding case of the above Theorem for $d=3$ with representative curve having $j = 0$ will be covered in Section \ref{EListECycle}. As we shall remark below, the number $A_{pq}$ treated in the following three results is very useful in analyzing the relation $(p,q)_d$.

\newtheorem{SilStaCor24}[DefEPair]{Theorem}
\begin{SilStaCor24}\label{SilStaCor24}
Let $(p,q)_{3}$ be an elliptic pair. There exists an integer $A = A_{pq}$ such that \[ A^{2} = \frac{2pq + 2p + 2q - p^{2} - q^{2} - 1}{3}. \] Choose the sign on $A$ such that $A \equiv_{4} p+q+1$. Then $E : y^2 = x^3 + g^{m}$, where $g$ is a particular primitive root modulo $q$, can have one of six orders $N_{q}$ over $\mathbb{F}_{q}$:
\begin{enumerate}
\item If $m \equiv_{6} 0$:  $N_{q} = \frac{1}{2}(p+q+1+3A)$,
\item If $m \equiv_{6} 1$:  $N_{q} = p$,
\item If $m \equiv_{6} 2$:  $N_{q} = \frac{1}{2}(p+q+1-3A)$,
\item If $m \equiv_{6} 3$:  $N_{q} = \frac{1}{2}(-p+3q+3-3A)$,
\item If $m \equiv_{6} 4$:  $N_{q} = 2q+2-p$,
\item If $m \equiv_{6} 5$:  $N_{q} = \frac{1}{2}(-p+3q+3+3A)$.
\end{enumerate}
\end{SilStaCor24}
\begin{proof}
The existence of $A$ is proven in \cite{SS1} (Corollary 7.6(a)). The rest follows from comparing the orders in terms of $A$ (Corollary 7.6(b) of \cite{SS1}) and in terms of the values given in our Theorem \ref{OrderJ0}. The six values from \cite{SS1} are:  \[\begin{cases}
q+1 \pm (q+1-p), \\
q+1 \pm \frac{1}{2}(q+1-p+3A), \\
q+1 \pm \frac{1}{2}(q+1-p-3A).
\end{cases}\]
Note that if our choice of primitive root $g$ does not make the order of $E : y^2 = x^3 + g$ over $\mathbb{F}_{q}$ equal to $p$, then we can take $g \leftarrow g^{-1}$ to produce the desired result.
\end{proof}

\newtheorem{AabCorollary}[DefEPair]{Corollary}
\begin{AabCorollary}\label{AabCorollary}
Let $(p,q)_3$ be an elliptic pair with representative of invariant $j = 0$. If we write $p = a^2 + 3b^2$, with $a \equiv_{3} -1$, and we choose the sign on $b$ such that $q = p+1+a-3b$, then $A_{pq} = a+b$.
\end{AabCorollary}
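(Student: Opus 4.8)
The plan is to compute $A_{pq}$ directly from its defining equation in Theorem \ref{SilStaCor24}, substitute the two parametrizations given in the hypothesis, namely $p = a^2 + 3b^2$ (with $a \equiv_3 -1$) and $q = p + 1 + a - 3b$, simplify the resulting expression for $A^2$, and finally resolve the remaining sign ambiguity using the normalization $A \equiv_4 p+q+1$ recorded in Theorem \ref{SilStaCor24}.

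First I would put the numerator of the formula for $A^2$ into a more transparent shape. Since $2pq - p^2 - q^2 = -(q-p)^2$, the defining relation reads
\[ A^2 = \frac{-(q-p)^2 + 2(p+q) - 1}{3}. \]
Now $q - p = 1 + a - 3b$ by hypothesis, and $p + q = 2p + 1 + a - 3b = 2a^2 + 6b^2 + 1 + a - 3b$ upon substituting $p = a^2 + 3b^2$. Expanding $(q-p)^2 = (1+a-3b)^2$ and collecting terms, I expect a clean cancellation: the constant terms, the linear terms in $a$ and $b$ should all vanish, while the quadratic contributions combine to $3a^2 + 3b^2 + 6ab$. Thus the numerator collapses to $3(a+b)^2$, giving $A^2 = (a+b)^2$ and hence $A = \pm(a+b)$.

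The remaining task, which I regard as the genuine content of the argument, is to fix the sign so that $A_{pq} = a+b$ rather than $-(a+b)$. Here I would use two observations. Because $p = a^2 + 3b^2$ is an odd prime, exactly one of $a, b$ is odd, so $a+b$ is odd; consequently $+(a+b)$ and $-(a+b)$ are \emph{distinct} modulo $4$, and at most one of them can meet the normalization $A \equiv_4 p+q+1$. Then I would compute the difference $p + q + 1 - (a+b) = 2(p+1) - 4b$ directly from $q = p+1+a-3b$, and note that $2(p+1) \equiv 0 \pmod 4$ since $p$ is odd, so $4 \mid p+q+1-(a+b)$. This shows $a+b \equiv_4 p+q+1$, which selects the sign and yields $A_{pq} = a+b$, as claimed.

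I expect the only delicate point to be this sign determination: the parity of $a+b$ (forced by $p$ being an odd value of the form $a^2+3b^2$) is exactly what guarantees the two candidate signs are separated modulo $4$, so that the normalization singles out one of them. The algebraic reduction of the numerator to $3(a+b)^2$ in the second step is purely mechanical, but it should be carried out carefully to confirm that every term not of the form $(a+b)^2$ indeed cancels.
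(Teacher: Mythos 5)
Your proposal is correct and follows essentially the same route as the paper: compute $A_{pq}^2$ by substituting the parametrizations into the defining formula to obtain $(a+b)^2$, then fix the sign using the normalization $A\equiv_4 p+q+1$ together with the fact that $a+b$ is odd (so the two candidate signs are distinguishable mod $4$). Your use of the identity $2pq-p^2-q^2=-(q-p)^2$ merely streamlines the paper's brute-force expansion; the argument is otherwise identical and complete.
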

\begin{proof}
We calculate:
\begin{align*} A_{pq}^{2} &= \frac{2pq + 2p + 2q - p^{2} - q^{2} - 1}{3} & \\
&= \frac{2(a^2 + 3b^2)(a^2 + 3b^2 + 1 + a - 3b) + 2(a^2 + 3b^2) + 2(a^2 + 3b^2 + 1 + a - 3b)}{3} \\
   & \hspace{0.5in} - \frac{(a^2 + 3b^2)^{2} + (a^2 + 3b^2 + 1 + a - 3b)^{2} + 1}{3} \\
&= \frac{1}{3} \left[ (2 a^4 + 6 a^2 b^2 + 2 a^2 + 2 a^3 - 6 a^2 b + 6 a^2 b^2 + 18 b^4 + 6 b^2 + 6 a b^2 - 18 b^3) \right] \\
   & \hspace{0.5in} + \frac{1}{3} \left[ (2 a^2 + 6 b^2) + (2 a^2 + 6 b^2 + 2 + 2 a - 6 b) - (a^4 + 6 a^2 b^2 + 9 b^4) \right] \\
   & \hspace{0.5in} - \frac{1}{3} \left[ a^4 + 6 a^2 b^2 + 2 a^2 + 2 a^3 - 6 a^2 b + 9 b^4 + 6 b^2 + 6 a b^2 - 18 b^3 + 1 + 2 a - 6 b \right] \\
   & \hspace{0.5in} - \frac{1}{3} \left[ a^2 - 6 a b + 9 b^2 + 1 \right] \\
&= \frac{1}{3} \left[ 3 a^2 + 3 b^2 + 6 a b \right] = (a+b)^{2}.
\end{align*}
Then, $A_{pq} \equiv_{4} p+q+1 \equiv_{4} p + (p + 1 + a - 3b) + 1 \equiv_{4} 2p+2+a-3b$. As $p$ is odd, $4|(2p+2)$, so $A \equiv_{4} a-3b \equiv_{4} a+b$. We know that $a+b$ is odd because $p$ is odd, so $-(a+b) \not\equiv_{4} a+b$. Therefore, $A = a+b$.
\end{proof}

We can also state Corollary \ref{AabCorollary} for $d \neq 3$. 
\newtheorem{AbCorollary}[DefEPair]{Corollary}
\begin{AbCorollary}\label{AbCorollary} Let $d$ be a squarefree positive integer larger than $3$ and 
Let $(p,q)_d$ be an elliptic pair over $d$ with $p\neq q$ Then there exists an integer $A_{pq}$ such that 
\begin{equation}\label{ImportantA}
    A_{pq}^{2} = \frac{2pq + 2p + 2q - p^{2} - q^{2} - 1}{d}. 
\end{equation}
Indeed, $A_{pq} = b_p (=b_q)$.
\end{AbCorollary}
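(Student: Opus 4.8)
The plan is to reduce this corollary to the parameter relation already established in Theorem \ref{abRelationDnot3}, after which only a short algebraic identity remains. First I would dispose of the hypotheses. Since $d > 3$, any representative curve has CM by an order in $\mathbb{Q}(\sqrt{-d})$ with $d \neq 3$, so its $j$-invariant is necessarily nonzero (the value $j = 0$ is exactly the $\mathbb{Q}(\sqrt{-3})$, i.e. $d = 3$, case). Thus the nonzero-$j$ hypothesis of Theorem \ref{abRelationDnot3} is automatic here. Moreover, the right-hand side of \eqref{ImportantA} is symmetric under interchanging $p$ and $q$, and by the Law of Elliptic Reciprocity (Theorem \ref{TheLawOfEllipticReciprocity}) $(q,p)_d$ is again an elliptic pair; hence I may assume without loss of generality that $p < q$. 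Theorem \ref{abRelationDnot3} then supplies $b_q = b_p$ and $a_q = a_p + 2$, together with the relation $q = p + 1 + a_p$.

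The main step is a direct evaluation of the numerator of \eqref{ImportantA}. Writing $a = a_p$ and $b = b_p$, I would first record the symmetric rewriting
\[ 2pq + 2p + 2q - p^2 - q^2 - 1 = -(q-p)^2 + 2(p+q) - 1. \]
Substituting $q - p = a + 1$ and $2(p+q) = 4p + 2(a+1) = (a^2 + db^2) + 2(a+1)$, where I have used $4p = a^2 + db^2$, the quadratic and linear terms in $a$ cancel, as do the constants, leaving exactly $db^2$. Dividing by $d$ gives $A_{pq}^{2} = b^2 = b_p^{2}$. Since $b_p$ is by definition a positive integer, setting $A_{pq} := b_p \,(= b_q)$ produces the required integer and completes the proof.

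I expect no serious obstacle here: the whole arithmetic substance — that $p$ and $q$ share the same $b$ and differ by $2$ in $a$ — is already contained in Theorem \ref{abRelationDnot3}, so this corollary is essentially a one-line consequence. The only points needing care are the reduction to $p < q$ via reciprocity and the (vacuous) verification of the nonzero-$j$ requirement for $d > 3$; the final identity is then routine and, pleasingly, parallels the $d = 3$ computation of Corollary \ref{AabCorollary}, with the value $a+b$ obtained there replaced here simply by $b$.
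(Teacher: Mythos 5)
Your proposal is correct and follows essentially the same route as the paper: reduce to $p<q$, invoke Theorem \ref{abRelationDnot3} to get $q = p+1+a_p$ with $b_q = b_p$, and then verify the identity by direct computation (the paper leaves this as "a straightforward computation," which your rewriting $2pq+2p+2q-p^2-q^2-1 = -(q-p)^2+2(p+q)-1$ carries out cleanly and correctly). Your explicit justifications of the WLOG via reciprocity and of the automatic nonvanishing of $j$ for $d>3$ are welcome details the paper glosses over.
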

\begin{proof}
We write $4p = a_p^2+db_p^2$. Using the fact that $p<q$ and $(p,q)_d$ is an elliptic pair over $d$, we have $q=p+1+a_p$. A straightforward computation shows that $A = b_{p} = b_{q}$.
\end{proof}

The number $A_{pq}$ is useful: Given $p$, $q$ and $d$ we can compute $\frac{2pq + 2p + 2q - p^{2} - q^{2} - 1}{d}$, and then we know immediately that if this number is not a perfect square, then $(p,q)_{d}$ is not an elliptic pair. Moreover, given only $p$ and $q$ within each other's Hasse intervals, we can compute the numerator of the quantity (\ref{ImportantA}) and factor out any perfect squares to compute a $d$ for which $(p,q)_d$ could be an elliptic pair. This provides a convenient proof that $p$ and $q$ sufficiently close together form an elliptic pair for exactly one square-free value of $d$.

\section{Elliptic Lists and Elliptic Cycles}\label{EListECycle}

\newtheorem{DefEList}{Definition}[section]
\begin{DefEList}\label{DefEList}
The symbol $(p_{1},p_{2},...p_{n})_{d}$ denotes an \textbf{elliptic list of length $n$ over $d$} if each of $(p_{1},p_{2})_{d}$, $(p_{2},p_{3})_{d}$, ..., $(p_{n-1},p_{n})_{d}$ is an elliptic pair. If $p_{1},p_{2},...,p_{n}$ are all distinct primes, then the elliptic list is a \textbf{proper elliptic list of length $n$ over $d$}.
\end{DefEList}

We will see in Theorem \ref{MinusDQNR} that proper elliptic lists of length $n \ge 3$ over $d = 3$ are all represented by elliptic curves of zero $j$-invariant, so we will take $j = 0$ and $d = 3$ to be equivalent cases throughout this section.

\newtheorem{5Values}[DefEList]{Theorem}
\begin{5Values}\label{5Values}
Let $(p_{1},p_{2},p_{3},p_{4},p_{5})_{3}$ be an elliptic list with $p_{1}, p_{5} \neq p_{3}$ and $p_{2} \neq p_{4}$, then \[ p_{1}-p_{2} = p_{5}-p_{4}. \]
\end{5Values}
\begin{proof}
Let $p_{3} = a^{2} + 3b^{2}$, and choose $b$ so that $A_{p_{3}p_{4}} = a+b$. Then $p_{5} = p_{3} + 3 + 3a - 3b$. Likewise, $A_{p_{2}p_{3}} = a-b$, so $p_{1} = p_{3} + 3 + 3a + 3b$. Thus, $p_{1} - p_{5} = 6b$. By Theorem \ref{OrderJ0}, $p_{2} - p_{4} = 6b$, so $p_{1} - p_{5} = p_{2} - p_{4}$. Rearrangement of terms gives the stated result.
\end{proof}

\newtheorem{DefECycle}[DefEList]{Definition}
\begin{DefECycle}\label{DefECycle}
Let $(p_{1},p_{2},...p_{n})_{d}$ form an \textbf{elliptic $n$-cycle over $d$} (or \textbf{elliptic cycle of length $n$ over $d$}) if $(p_{1},p_{2},p_{3},...,p_{n})_{d}$ forms an elliptic list and $(p_{n},p_{1})_{d}$ forms an elliptic pair. If the list is proper, then we say that we have a \textbf{proper elliptic $n$-cycle}.
\end{DefECycle}

\newtheorem{OnlyProper6Cycle}[DefEList]{Theorem}
\begin{OnlyProper6Cycle}\label{OnlyProper6Cycle}
If a proper elliptic $n$-cycle over $d$ exists for $n \ge 3$, then $d = 3$ and $n = 6$.
\end{OnlyProper6Cycle}
\begin{proof}
When $j \neq 0$ ($d \neq 3$ and some $d = 3$ curves), this result strengthens Corollary 6.2 of \cite{SS1}, but it utilizes the exact same proof. Recall that if $j \neq 0$, then $\#E(\mathbb{F}_{p}) = p+1 \pm a$. In particular, one of these values is greater than $p$, and the other is no larger than $p$. If we have a proper elliptic cycle over $d$, it has a least element. Let $p$ be this least element, and let $(p,q)_{d}$ and $(r,p)_{d}$ be the elliptic pairs it is part of. Then either $q$ or $r$ is less than $p$, a contradiction.

Now let $d = 3$ and $j = 0$. We note that by Theorem \ref{5Values}, $p_{1}-p_{2} = p_{5}-p_{4}$. Suppose that $p_{1} = p_{4}$, i.e. we have a $3$-cycle. Then \[ p_{5}-p_{4} = p_{1}-p_{2} = p_{4}-p_{5}, \] so $p_{4} = p_{5}$, and the cycle is not proper.

Suppose that $n = 4$. then $p_{1}-p_{2} = p_{5}-p_{4}$. But because we have a $4$-cycle, $p_{1} = p_{5}$, so $p_{2} = p_{4}$, and the cycle is not proper.

Now we let $n = 5$. We have that $p_{1}-p_{2} = p_{5}-p_{4}$ and $p_{2}-p_{3} = p_{6}-p_{5}$, so \[ p_{6} - p_{4} = p_{1} - p_{3}. \] Setting $p_{1} = p_{6}$ implies that $p_{3} = p_{4}$, so the cycle is not proper.

Now let $n \ge 6$. Again, $p_{1}-p_{2} = p_{5}-p_{4}$ and $p_{2}-p_{3} = p_{6}-p_{5}$, so \[ p_{6} - p_{4} = p_{1} - p_{3}. \] But now, $p_{3}-p_{4} = p_{7}-p_{6}$, so \[ p_{1}-p_{4} = p_{7}-p_{4}, \] and thus $p_{1} = p_{7}$. This means that if a proper $n$-cycle has length at least $6$, then it has length exactly equal to $6$.
\end{proof}

Given only a single case left to check to see if there are any proper elliptic cycles other than elliptic pairs, we searched for elliptic cycles over $d=3$. The following results aid in directing such a search:

\newtheorem{4List}[DefEList]{Theorem}
\begin{4List}\label{4List}
Let $(p_{1},p_{2},p_{3},p_{4})_{3}$ be a proper elliptic list. If $p_{1} = a^2 + 3b^2$, with $a \equiv_{3} -1$, then $p_{4} = (-a-2)^2 + 3b^2$.
\end{4List}
\begin{proof}
We choose $b$ such that $p_{2} = c^{2} + 3d^{2} = p+1+a-3b$. It follows from Corollary \ref{AabCorollary} that $a + b = c + d$ for suitable choice of $d$. Solving for $c$ and $d$, we get that \[ \begin{matrix} c & = & \frac{a+3b-1}{2}, \\ d & = & \frac{a-b+1}{2}. \end{matrix} \] Note that $p_{2} + 1 + c - 3d = p_{1}+1+a-3b + 1 + \frac{1}{2}(a+3b-1) - \frac{3}{2}(a-b+1) = p_{1}$, so $p_{3} = p_{2}+1+c+3d$.

Let $p_{3} = e^2 + 3f^2$. Choose $f$ such that $c-d = e+f$. Then \[ \begin{matrix} e & = & \frac{c-3d-1}{2} & = & \frac{-a+3b-3}{2}, \\ f & = & \frac{c+d+1}{2} & = & \frac{a+b+1}{2}. \end{matrix} \] Now, $p_{3}+1+e-3f = p_{2}+1+c+3d+1+\frac{c-3d-1}{2}-3\frac{c+d+1}{2} = p_{2}$, so $p_{4} = p_{3}+1+e+3f$.

Now let $p_{4} = g^2 + 3h^2$, and choose $h$ such that $e-f = g+h$. Then \[ \begin{matrix} g & = & \frac{e-3f-1}{2} & = & \frac{(-a+3b-3)-3(a+b+1)-2}{4} & = & -a-2, \\ h & = & \frac{e+f+1}{2} & = & \frac{(-a+3b-3)+(a+b+1)+2}{4} & = & b. \end{matrix} \]
\end{proof}

\newtheorem{4ListTwice}[DefEList]{Corollary}
\begin{4ListTwice}\label{4ListTwice}
Let $(p_{1},p_{2},p_{3},p_{4},p_{5},p_{6})_{3}$ be a proper elliptic list, and let $p_{1} = a^2 + 3b^2$, with $a \equiv_{3} -1$. Then:  \begin{align*}
p_{2} &= \left(\frac{a+3b-1}{2}\right)^{2} + 3 \left(\frac{a-b+1}{2}\right)^{2}, \\
p_{3} &= \left(\frac{-a+3b-3}{2}\right)^{2} + 3 \left(\frac{a+b+1}{2}\right)^{2}, \\
p_{4} &= (-a-2)^2 + 3b^2, \\
p_{5} &= \left(\frac{-a-3b-3}{2}\right)^{2} + 3 \left(\frac{a-b+1}{2}\right)^{2}, \\
p_{6} &= \left(\frac{a-3b-1}{2}\right)^{2} + 3 \left(\frac{a+b+1}{2}\right)^{2}.
\end{align*}
\end{4ListTwice}
\begin{proof}
The values for $p_{2},p_{3},p_{4}$ are given in the proof of Theorem \ref{4List}. Note that the process to obtain $p_{6}$ from $p_{1}$ is exactly the same as the process to obtain $p_{2}$ from $p_{1}$, but the sign of $b$ is flipped. Thus, the formulae for $p_{2}$ and $p_{6}$ are the same, except that the sign on $b$ is flipped. The same relationship holds for $p_{3}$ and $p_{5}$.
\end{proof}

\newtheorem{7List}[DefEList]{Corollary}
\begin{7List}\label{7List}
There are no proper elliptic lists of length 7 over $d=3$.
\end{7List}
\begin{proof}
Note that the ``a"-component of $p_4$ in the proof of  Corollary \ref{4ListTwice} is congruent to $-1$ modulo $3$, as was the ``a" component of $p_1$. If there were a proper elliptic list of length 7, then using $p_4$ as the initial point of an elliptic list of length 4 over $3$, we see from Theorem \ref{4List} that the ``a"-component of $p_7$ would have to be $-(-a-2)-2$, which is $a$, and thus $p_7 = p_1$, contradicting that the list of length $7$ is a proper list.
\end{proof}

The following result allows some efficiency in searching for proper elliptic six cycles: 
\newtheorem{6CycleMod7}[DefEList]{Theorem}
\begin{6CycleMod7}\label{6CycleMod7}
If $p = a^2 + 3b^2$ is part of a proper elliptic $6$-cycle over $d = 3$, then $a \equiv_{7} -1$ and $7|b$.
\end{6CycleMod7}
\begin{proof}
The proof is entirely computational, taking the orders found in Corollary \ref{4ListTwice}, and then computing all the primes modulo $7$. We find that at least one is divisible by $7$, a contradiction, except in the case that $a \equiv_{7} -1$ and $7|b$. Then every prime in the cycle is equivalent to $1$ modulo $7$. The cases are given in Table \ref{6CycleMod7Table} in Appendix \ref{6CycleMod7Appendix}.
\end{proof}

Due to the structure imposed by Theorem \ref{5Values}, it is clear that we can determine all the primes in a proper elliptic $6$-cycle given a single prime in the cycle. Using divisibility by $7$ to eliminate possible cases, we wrote a computer program to search the remaining values of $a$ and $b$ to find $6$-cycles. It found them almost immediately, with the smallest being \[ (275269,274723,275227,276277,276823,276319)_{3}, \] corresponding to $(a,b) = (251,266)$ (see Corollary \ref{4ListTwice} for notation). Interestingly enough, if we let $c = 275772 = 525^2 + 3 \cdot 7^2$, and compute the six orders of Theorem \ref{OrderJ0}, then we get the six primes given above. Of course, Theorem \ref{OrderJ0} does not apply here as $c$ is composite, but it just so happens that $c+1$ is prime (this is not the case in general).

The concept of a proper elliptic $6$-cycle is similar to the notion of an \textit{aliquot cycle} defined by Silverman and Stange \cite{SS1}, except that an aliquot cycle fixes a curve $E$ such that $N_{p_{1}} = p_{2},N_{p_{2}} = p_{3},..., N_{p_{n}} = p_{1}$. It is not too difficult to see that any proper elliptic cycle can be made into an aliquot cycle. Find curves $E_{i}/\mathbb{F}_{p_{i}}$, $1 \le i \le n$ such that $N_{p_{i}} = p_{i+1}$ for $1 \le i \le n-1$ and $N_{p_{n}} = p_{1}$. The coefficients of $E_{i}$ are only defined uniquely modulo $p_{i}$, so we can use the Chinese Remainder Theorem to find the unique coefficients modulo $\prod\limits_{i=1}^{n} p_{i}$ which are equivalent to the coefficients of $E_{i}$ modulo $p_{i}$ for all $1 \le i \le n$. In the case of the list above, we can rewrite it as \[ (274723,275269,276319,276823,276277,275227)_{3} \] to get the smallest (normalized) aliquot cycle corresponding to the curve $E : y^2 = x^3 + 15$.
In the course of computing the cycle above, we also found other $6$-cycles with $3$ primes represented, each twice. Here, $p_{1} = p_{2}$, $p_{4} = p_{5}$, and $p_{3} = p_{6}$, so that $p_{1}$ and $p_{4}$ are the so-called \textit{anomalous primes}. We found $3$ cycles of this form with primes less than one million. They are:  \begin{align*}
(114661,114661,115249,115837,115837,115249)_{3}, \\
(169219,169219,169933,170647,170647,169933)_{3}, \\
(283669,283669,284593,285517,285517,284593)_{3}.
\end{align*} After these cycles, the next smallest one has primes greater than ten million.

\section{Properties of Proper Elliptic Lists}\label{SecEListDNot3}

We saw in section \ref{EListECycle} that proper elliptic lists over $d = 3$ have length no longer than six.  In this section, we explore proper elliptic lists over $d \neq 3$.  Such lists are either increasing or decreasing (because $\#\tilde{E}(\mathbb{F}_{p}) = p+1 \pm a$; see the proof of Theorem \ref{OnlyProper6Cycle}), so we will assume throughout that lists are written in ascending order.

Theorem \ref{abRelationDnot3} allows us to describe any proper elliptic list over $d \neq 3$ if we have just one of its members.  The following Theorem is an extension of Theorem \ref{abRelationDnot3}, and follows immediately from it and the definition of an elliptic list:

\newtheorem{abListRelationDnot3}{Theorem}[section]
\begin{abListRelationDnot3}\label{abListRelationDnot3}
Let $(p_{1},\ldots,p_{n})_{d}$ be a proper elliptic list over $d \neq 3$, and let $a_{p_{1}},\ldots,a_{p_{n}}$ and $b_{p_{1}},\ldots,b_{p_{n}}$ be the unique positive integers such that $4p_{i} = a_{p_{i}}^{2} + db_{p_{i}}^{2}$ for each $i = 1,\ldots,n$. Then $a_{p_{i}} = a_{p_{1}} + 2i - 2$ and $b_{p_{i}} = b_{p_{1}}$ for each $i = 1,\ldots,n$.
\end{abListRelationDnot3}

\newtheorem{PrimePolysDnot3}{Theorem}[section]
\begin{PrimePolysDnot3}\label{PrimePolysDnot3}
Let $d\neq 3$ be a squarefree positive integer. Consider a prime number $p_1$ which is of the form $4p_1 = a^2 + db^2$. If $p_1$ is the initial term of a proper elliptic list of length $n$ over $d$, then the quadratic polynomial $x^2 + ax + p_1$ has $n$ consecutive prime values for $x = 0,1,\ldots,n-1$.
\end{PrimePolysDnot3}

Theorem \ref{abListRelationDnot3} implies that any proper elliptic list $(p_1,\ldots,p_n)_{d}$ over $d \neq 3$ is generated by $n$ consecutive prime values of the quadratic polynomial in $i$ \begin{align*} & \frac{1}{4}((2i + a_{1})^{2} + db_{1}^{2}) \\ = & \; i^{2} + a_1i + p_{1} \end{align*} for $i = 0,1,\ldots,n-1$.  Conversely, any such polynomial will generate an elliptic list.  Therefore, the problem of finding proper elliptic lists over $d \neq 3$ is equivalent to finding prime-generating polynomials of the form above.

The longest proper list we've found so far is:  \begin{align*} (41, 43, 47, 53, 61, 71, 83, & 97, 113, 131, 151, 173, 197, 223, 251, 281, 313, \\ & 347, 383, 421, 461, 503, 547, 593, 641, 691, 743, 797, 853, 911,\\ & 971, 1033, 1097, 1163, 1231, 1301, 1373, 1447, 1523, 1601)_{163}. \end{align*}
This list corresponds to the famous polynomial $n^{2} + n + 41$, which is prime for $n = 0,1,\ldots,39$.

It is still unclear whether proper elliptic lists of arbitrarily long length exist, although we have an upper bound for the length given $d$ in Theorem \ref{MinusDQNR}.

\newtheorem{MinusDQNR}[abListRelationDnot3]{Theorem}
\begin{MinusDQNR}\label{MinusDQNR}
Let $d \equiv_{8} 3$, $d \neq 3$ (or $d = 3$ and $j \neq 0$). Let $z$ be the smallest prime such that $\left(\frac{-d}{z}\right) \neq -1$. Then there are no proper elliptic lists over $d$ of length $n \ge z$.
\end{MinusDQNR}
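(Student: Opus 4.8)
The plan is to reduce the statement to elementary quadratic-residue bookkeeping for the prime-generating polynomial attached to the list. By Theorem~\ref{PrimePolysDnot3}, a proper elliptic list of length $n \ge z$ over $d$ beginning at $p_1$, with $4p_1 = a^2 + d b^2$, produces the quadratic $f(x) = x^2 + a x + p_1$ whose values $f(0), f(1), \dots, f(n-1)$ are all prime; in particular $f(0), \dots, f(z-1)$ are prime. Its discriminant is $a^2 - 4p_1 = -d b^2$. First I would record that $z$ is odd: since $d \equiv_{8} 3$ gives $-d \equiv_{8} 5$, the Kronecker symbol $\left(\frac{-d}{2}\right) = -1$, so $2$ cannot be the least prime with $\left(\frac{-d}{z}\right) \neq -1$.

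Next I would show $f$ has a root modulo $z$. Writing $\left(\frac{-d b^2}{z}\right) = \left(\frac{-d}{z}\right)\left(\frac{b}{z}\right)^{2}$, this equals $\left(\frac{-d}{z}\right)$ when $z$ does not divide $b$ and equals $0$ when $z \mid b$; in either case it is not $-1$, so $-d b^2$ is a square modulo the odd prime $z$ and $f$ has at least one root there. Since $x = 0, 1, \dots, z-1$ runs through a full set of residues mod $z$, every root is attained, so some $f(x_0)$ with $0 \le x_0 \le z-1$ is divisible by $z$; being prime, it must then equal $z$, and the whole proof consists in contradicting this.

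The argument splits on the number of roots. If $-d b^2 \not\equiv_{z} 0$, there are two distinct roots, hence two distinct indices $x_1 \neq x_2$ in $\{0, \dots, z-1\}$ with $z \mid f(x_1)$ and $z \mid f(x_2)$. Because $f$ is strictly increasing on $[0,\infty)$ (its vertex is at $-a/2 < 0$, as $a>0$), the numbers $f(x_1)$ and $f(x_2)$ are distinct primes, and two distinct primes cannot both equal $z$: contradiction, with no size estimate needed. The delicate case, and the main obstacle, is the single-root case $-d b^2 \equiv_{z} 0$, where only one value $f(x_0) = z$ appears and I must instead rule out the boundary possibility that the list genuinely contains $z$.

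To close the single-root case I would prove $p_1 > z$, so that $f(x_0) \ge f(0) = p_1 > z$ contradicts $f(x_0) = z$. That $p_1 \ge z$ follows from minimality of $z$: if $p_1 < z$ then $\left(\frac{-d}{p_1}\right) = -1$, yet $p_1 \mid 4p_1 = a^2 + d b^2$ makes $-d$ a square mod $p_1$ when $p_1$ does not divide $b$, and forces $p_1^2 \mid 4p_1$ when $p_1 \mid b$, both impossible for $p_1 > 3$. To exclude $p_1 = z$, note that $z \mid \Delta$ together with $p_1 = z$ rules out $z \mid b$ (else $a^2 = 4z - d b^2$ has $z$-adic valuation exactly $1$, impossible for a square), so $z \mid d$; writing $d = z d'$ with $d$ squarefree and using that $a$ is odd by Lemma~\ref{OrderJ0} (hence $z \mid a$ gives $a = z a_0$ with $a_0 \ge 1$), the relation $z a_0^2 = 4 - d' b^2$ is squeezed between $3$ and $3$, forcing $z = 3$ and $d = 3$, which is excluded. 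Finally, the case $d = 3$ with $j \neq 0$ runs identically, since there the analogue of the $(a,b)$-relation of Theorem~\ref{abListRelationDnot3} holds and yields the same polynomial $f$.
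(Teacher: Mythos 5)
Your proof is correct, and while its engine is the same as the paper's, the endgame is genuinely different. The common core: because $\left(\frac{-d}{z}\right)\neq -1$, the congruence $(2x+a)^{2}\equiv -db^{2}\pmod{z}$ is solvable, and since $x=0,\dots,z-1$ exhausts the residues modulo the odd prime $z$ (equivalently, since the $a_{i}$ form an arithmetic progression with common difference $2$), some list element is divisible by $z$ and, being prime, must equal $z$. Where you diverge is in disposing of that boundary possibility. The paper observes that every $p_{i}=\frac{1}{4}(a_{i}^{2}+db^{2})\ge\frac{d+1}{4}$ and asserts (without proof, though it is true) that $z\le\frac{d+1}{4}$, so the only escape is $p_{1}=z=\frac{d+1}{4}$ with $a_{1}=b=1$, whence $p_{z}=z^{2}$ is composite. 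You instead split on the number of roots of $f$ modulo $z$: with two distinct roots you get two distinct primes both divisible by $z$, an immediate contradiction --- and this is in fact the case that actually occurs for the Euler list over $d=163$, where $f(0)=41$ and $f(40)=41^{2}$; with a repeated root you show, via minimality of $z$ applied to $p_{1}$ and a valuation/size squeeze, that $p_{1}>z$, so $z$ cannot appear in the list at all. Both endgames are sound; yours avoids the unjustified inequality at the cost of a longer $p_{1}\neq z$ analysis. Two minor remarks: the oddness of $a$ plays no role in deducing $z\mid a$ from $z\mid a^{2}$; and in your final squeeze the outcome $z=3$, $d=3$ should, on the $d=3$, $j\neq 0$ branch of the theorem, be discharged by the standing convention that all primes exceed $3$ (so $p_{1}=z=3$ is impossible) rather than by the hypothesis $d\neq 3$.
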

\begin{proof}
Let $(p_1,\ldots,p_n)_{d}$ be an elliptic list of length $n$. Let $a_{i},b_{i}$ be the unique positive integers such that $4p_{i} = a_{i}^{2} + db_{i}^{2}$, and note that for all $1 \le i \le n$, $a_i = a_1+2i-2$, $b_{i} = b_{1} = b$ (Theorem \ref{abRelationDnot3}).

First consider the case $\left(\frac{-d}{z}\right) = 0$.  Assume that $n \ge z$.  The numbers $a_1,a_2,\ldots,a_n$ take every value modulo $z$ because $z$ is odd and the $a_{i}$'s increment by two, so one of them, say $a_j$, will be divisible by $z$.  Therefore, \[ 4p_j = a_j^2 + d b^2 \equiv_{z} 0 + 0 \cdot b \equiv_{z} 0, \] and thus $z|p_{j}$, a contradiction unless $z=p_{j}$, which we cover below.

Now consider the case $\left(\frac{-d}{z}\right) = 1$.  Assume that $n \ge z$. As before, the numbers $a_1,a_2,\ldots,a_n$ take every value modulo $z$, so one of them, say $a_j$, will satisfy $a_j \equiv_z b\sqrt{-d}$.  Therefore, \[ 4p_j = a_j^2 + d b^2 \equiv_z -d b^2 + d b^2 \equiv_z 0, \] implying that $z|p_j$, a contradiction unless $z=p_{j}$, which we cover below.

It remains only to consider the case that one of the elements in the cycle is in fact $z$.  The smallest number which can be written as $\frac{1}{4} (a^{2} + db^{2})$ for $a,b \in \mathbb{N}$ is at least $\frac{d+1}{4}$, and $z$ is never more than this value, so we only have to look at the case $p_{1} = z = \frac{d+1}{4}$. In this case, $a_{1} = b_{1} = 1$, so $p_{i} = \frac{(2i-1)^2+d}{4}$, and $p_{z} = \frac{(2z-1)^{2} + (4z-1)}{4} = z^{2}$ is composite.
\end{proof}

In particular, we see that for $d = 3, j \neq 0$, $\left(\frac{-d}{3}\right) = 0$, so we cannot construct any proper elliptic lists of length $n \ge 3$ in this case - only elliptic pairs.

\newtheorem{NoEListEnot19Mod24}[abListRelationDnot3]{Corollary}
\begin{NoEListEnot19Mod24}\label{NoEListEnot19Mod24}
Assume that $d\equiv_{8} 3$, and $d \neq 3$. If $d \not\equiv_{24} 19$, then there do not exist any proper elliptic lists of length $n \ge 3$ over $d$.
\end{NoEListEnot19Mod24}
\begin{proof}
By hypothesis, $d \equiv_{24} 3 \text{ or } 11$. In the first case, $\left(\frac{-d}{3}\right) = 0$, and in the latter case, $\left(\frac{-d}{3}\right) = 1$.
\end{proof}

\newtheorem{DefinitionM}[abListRelationDnot3]{Definition}
\begin{DefinitionM}\label{DefinitionM}
Restrict $d$ to be a positive and square-free integer. 
\[
  \mathcal{M}(d) = \left\{\begin{tabular}{ll}
                          $\min\{z>1: z \mbox{ prime and } \left(\frac{-d}{z}\right) \neq -1\}$ & if $d\equiv_8 3$\\
                          $1$                                                                   & otherwise
                          \end{tabular}
                   \right.  
\]
$\mathcal{M}(d)$ is the \textbf{maximum allowable list-length function}.
\end{DefinitionM}
For $d \not\equiv_{8} 3$, there are no lists of any length, motivating our definition of $\mathcal{M}(d) = 1$. 
For $d \equiv_{8} 3$, the value of  $\mathcal{M}(d)$ is motivated by Theorem \ref{MinusDQNR}. Note that in the case $d = 3$, there exist proper elliptic lists of length up to $6$, so $\mathcal{M}(3) = 2$ is less than the longest proper elliptic list: This happens in this case only, although this bound holds for representative curves with $j \neq 0$.

Currently we have no guarantee that any list of length $\mathcal{M}(d)-1$ exists over $d$, motivating our next definition:
\newtheorem{DefinitionL}[abListRelationDnot3]{Definition}
\begin{DefinitionL}\label{DefinitionL}
Restrict $d$ to be a positive and square-free integer. 
Define 
\[
  \mathcal{L}(d) = \max\{n | (p_{1},...,p_{n})_{d} \text{ is a proper elliptic list}\}.
\]
Also define
\[
  f(d) = \mathcal{M}(d) - \mathcal{L}(d).
\]
\end{DefinitionL}

Except for $d=3$ we have $\mathcal{L}(d)<\mathcal{M}(d)$ and $f(d)>0$. We suspect that $f(d)$ is related to the class number $h(-d)$ in some way, but we have not been able to prove any results so far, other than the fact that $f(d) = 1$ if $h(-d) = 1$.  

\section{The Relationship between Elliptic Pairs and Current Conjectures}\label{ConjRelations}

The distribution of elliptic pairs is of primary importance because it determines the time and space complexity of algorithms for generating elliptic curves of prime order (see \cite{BS1}, for example). Although thus far nobody has explicitly formulated a conjecture for this, many similar heuristics and conjectures (see \cite{BS1},\cite{SS1},\cite{K1},\cite{Z1}) suggest that the number of elliptic pairs $(p,q)_{d}$ with primes $p \le q$ less than $X$ should be asymptotic to \[ C_{d} \frac{X}{log^{2} X} \] for some constant $C_{d}$ as $X \rightarrow \infty$. It is believed that $C_{d}$ is greater than zero for all positive square-free integers $d \equiv_{8} 3$, but so far it is not even known if the number of elliptic pairs for any given $d$ is infinite.

The remark at the end of Section \ref{EPair} indicates that for every pair of primes within each other's Hasse interval, we have an elliptic pair. By the prime number theorem, for any given $p$, the probability that another number $q$ near it is prime is approximately $1/\log p$, so $p$ forms an elliptic pair with about $4\sqrt{p}/\log p$ primes, of which roughly $2\sqrt{p}/\log p$ are greater than $p$ (so that we only consider unique elliptic pairs, since Theorem \ref{TheLawOfEllipticReciprocity} indicates that the order of $p$ and $q$ does not matter). Since $d \le 4p-1$ and $d \equiv_8 3$, we have roughly $p/2$ values of $d$ which could make $(p,q)_d$ an elliptic pair, but each square-free $d$ is equivalent to $\frac{1}{2}\sqrt{\frac{p}{d}}$ of these values (obtained by multiplying $d$ by the squares of odd numbers), so we'd expect any particular $d$ to be chosen with probability $(2\sqrt{p}/\log p)/\left( \frac{p}{2} / \left(\frac{1}{2}\sqrt{\frac{p}{d}}\right) \right) = 2/\sqrt{d}\log p$. Since $\pi(X) \sim \frac{X}{\log{X}}$, this suggests to us that for any given $d$, the number of elliptic pairs with primes less than sufficiently large $X$ should be about \[ \sum\limits_{p \le X} \frac{2}{\sqrt{d}\log p} \sim \sum\limits_{n=1}^{X/\log X} \frac{2}{\sqrt{d} \log (n \log n)} \sim \sum\limits_{n=1}^{X/\log X} \frac{2}{\sqrt{d}\log n} \asymp \frac{1}{\sqrt{d}} \frac{X}{log^{2} X}. \] Because the above sum diverges as $X \rightarrow \infty$, we see that for at least one value of $d$, there must be an infinite number of elliptic pairs. Furthermore, we would expect that $C_d/C_{d^\prime} \approx \sqrt{d^\prime/d}$ as a first-order approximation. Unfortunately, this seems to contradict experimental data obtained by Silverman and Stange in Section 9 of \cite{SS1}, which shows a reciprocal relationship for $d$'s with class number $h(-d) = 1$. As $-d$ grows larger, however, the frequency of elliptic pairs decreases in general. Therefore, it is likely that our heuristic must be altered to include the class number in some way. Replacing $\frac{1}{\sqrt{d}}$ by $\frac{\sqrt{d}}{[h(-d)]^2}$ gives the closest results to our analysis, although we have no heuristic reason why this formula should be chosen.

We computed $d$ for every possible pair of primes $(p,q)$, each less than $10^7$, and examined the results for all $d \equiv_8 3$ such that $h(-d) = 1,2,3,4$. We found that $C_d \sim C \frac{\sqrt{d}}{[h(-d)]^2}$ as $C_d \rightarrow \infty$, in agreement with our guess, with $C \approx 0.16$. Unfortunately, $\frac{\sqrt{d}}{[h(-d)]^2} < 13$ for all the values we looked at, so our results are ineffective in computing the actual value of $C_d$. Furthermore, the value $C_3 \approx 1.757$ is about $6.26$ times greater than expected, so it is likely that the $d=3$ case must be examined using a different heuristic, just as it had to be treated separately earlier. Our results are given in Figure \ref{AsymptoticsGraph}, with the data listed in Appendix \ref{AsymptoticsData}.

\begin{figure}[h]
\begin{center}
\includegraphics[scale=0.35]{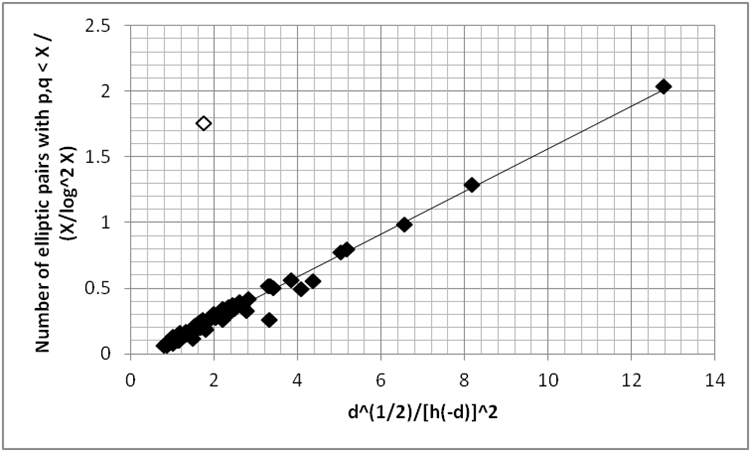}
\caption{Plot of the number of elliptic pairs $(p,q)_d$ with $p,q < X = 10^7$ divided by $\frac{X}{\log^2 X}$ as a function of $\frac{\sqrt{d}}{[h(-d)]^2}$ for $d \equiv_8 3$ and $h(-d) \le 4$. Note that we had to treat the $d=3$ case (open diamond) separate from the others, as always. The linear fit suggests that our asymptotic relation, $C_d \asymp \frac{\sqrt{d}}{[h(-d)]^2}$ is correct as $C_d \rightarrow \infty$.}
\label{AsymptoticsGraph}
\end{center}
\end{figure}

Elliptic pairs are also related to the Bouniakowsky conjecture for quadratic polynomials.
The Bouniakowsky conjecture states that all irreducible polynomials $p(x)$ with integer coefficients of degree greater than or equal to two such that there does not exist a prime $m$ which divides $p(x)$ for all integers $x$ take on an infinite number of prime values. Such polynomials are called \textit{Bouniakowsky polynomials}. The case for polynomials of degree one was proven by Dirichlet.

\newtheorem{Bouniakowsky}{Theorem}[section]
\begin{Bouniakowsky}\label{Bouniakowsky}
The Bouniakowsky conjecture (for quadratic polynomials) implies that there are an infinite number of anomalous primes (and thus elliptic pairs) over any $d \equiv_{8} 3$.
\end{Bouniakowsky}
\begin{proof}
Let $d = 3$, and let $(p,p)_{3}$ be an elliptic pair. Then $p = p+1+a-3b$, so $a=3b-1$. As $p = a^2 + 3b^2$, we have that \[ p(b) = (3b-1)^{2} + 3b^2 = 12b^2 - 6b + 1, \] which is easily checked to be a Bouniakowsky polynomial.

If $d \neq 3$, then $(p,p)_{d}$ is an elliptic pair if and only if $p = p+1-a$, or $a = 1$. Thus, $p = \frac{1}{4}(1+db^{2})$. We know that $b$ must be odd so that $p$ is an integer, so we set $b = 2c+1$ and then \[ p(c) = \frac{1}{4}(1+d(2c+1)^{2}) = dc^{2} + dc + \frac{1+d}{4}. \] Since $4|(1+d)$, this polynomial has integer coefficients, and it is easy to check that it is a Bouniakowsky polynomial.
\end{proof}

\appendix
\section{Proof of Theorem \ref{OrderJ0}}
\label{OrderJ0Appendix}

We follow the proof for curves of the form $E : y^2 = x^3 - kx$ given in \cite{W1}.

Let $E : y^2 = x^3 + k$ be an elliptic curve defined over $\mathbb{F}_p$, with $k \in \mathbb{F}_p^\times$ and $p \ge 5$ prime. If $p \equiv_3 2$, then every value modulo $p$ is a cubic residue, so $x^3$ takes on every value modulo $p$, and $x^3 + k$ does as well. Of the values modulo $p$, $y^2 = 0$ has only one solution, while $y^2 = a \in \mathbb{F}_p^\times$ has either zero or two solutions, depending on whether $a$ is a quadratic residue or non-residue. The number of each is the same, so this gives us $p$ points. Adding the point at infinity $\mathcal{O}$, we have that \[ \#E(\mathbb{F}_p) = p+1. \]

Now let $p \equiv_3 1$. Choose a primitive root $g$ modulo $p$ and let $\chi_6(g^j) = e^{ij\pi/3}$ be a Dirichlet character. Then $\chi_6^2 = \chi_3$ and $\chi_6^3 = \chi_2$.

\newtheorem{FirstLemmaOrderJ0}{Lemma}[section]
\begin{FirstLemmaOrderJ0}\label{FirstLemmaOrderJ0}
Let $p \equiv_3 1$ be prime and let $x \in \mathbb{F}_p^\times$. Then
\[
\#\{ u \in \mathbb{F}_p^\times \vert u^t = x \} = \sum\limits_{\ell=0}^{t-1} \chi_t(x)^\ell
\]
for $t \in \{ 2,3,6 \}$.
\end{FirstLemmaOrderJ0}
\begin{proof}
Since $p \equiv_3 1$, we also have that $p \equiv_6 1$, so there are six sixth roots of one in $\mathbb{F}_p^\times$. Therefore, if there is a solution to $u^6 = x$, there are six solutions. Write $x \equiv_p g^j$. Then $x$ is a sixth power modulo $p$ if and only if $6 \vert j$. We have
\[
\sum\limits_{\ell=0}^{5} \chi_6(x)^\ell = \sum\limits_{\ell=0}^{5} e^{ij\ell\pi/3},
\]
which evaluates to six if $6 \vert j$ and to zero otherwise, giving us exactly the number of $u$ for which $u^6 \equiv_p x$. This proves the lemma for $t = 6$; the proofs for $t = 2$ and $t = 3$ follow similarly.
\end{proof}

\newtheorem{SecondLemmaOrderJ0}[FirstLemmaOrderJ0]{Lemma}
\begin{SecondLemmaOrderJ0}\label{SecondLemmaOrderJ0}
Let $p \equiv_3 1$ be prime. Then
\[
\sum\limits_{b \in \mathbb{F}_p^\times} \chi_6(b)^\ell
\]
is $p-1$ if $6 \vert \ell$ and is zero otherwise.
\end{SecondLemmaOrderJ0}
\begin{proof}
If $6 \vert \ell$, then all the terms in the sum are $1$, so the sum is $p-1$. Otherwise, $\chi_6(g)^\ell \neq 1$. Multiplying by $g$ permutes the elements of $\mathbb{F}_p^\times$, so
\[
\chi_6(g)^\ell \sum\limits_{b \in \mathbb{F}_p^\times} \chi_6(b)^\ell = \sum\limits_{b \in \mathbb{F}_p^\times} \chi_6(gb)^\ell = \sum\limits_{c \in \mathbb{F}_p^\times} \chi_6(c)^\ell,
\]
which is the original sum. As $\chi_6(g)^\ell \neq 1$, the sum must be zero.
\end{proof}

We now show that the number of points on $E$ can be expressed in terms of Jacobi sums. By separating out the terms where $x = 0$ and $y = 0$, we find that the number of points is
\[
\#\{\mathcal{O}\} + \#\{ y : y^2 = k \} + \#\{ x : x^3 = -k \} + \sum\limits_{a+b=k; a,b \neq 0} \#\{ y : y^2 = a \} \#\{ x : x^3 = -b \}.
\]
By Lemma \ref{FirstLemmaOrderJ0}, the first three terms evaluate to $1$, $\sum\limits_{j=0}^{1} \chi_2(k)^j$, and $\sum\limits_{\ell = 0}^{2} \chi_3(-k)^\ell$, respectively. The latter summation expands to
\begin{align*}
&\sum\limits_{a \neq 0,k} \sum\limits_{j=0}^{1} \chi_2(a)^j \sum\limits_{\ell=0}^{2} \chi_3(a-k)^\ell \\
= &\sum\limits_{a \neq 0,k} [\chi_2(a)^0 + \chi_2(a)^1][\chi_3(a-k)^0 + \chi_3(a-k)^1 + \chi_3(a-k)^2] \\
= &\sum\limits_{a \neq 0,k} [1 + \chi_2(a) + \chi_3(a-k) + \chi_2(a)\chi_3(a-k) + \chi_3(a-k)^2 + \chi_2(a)\chi_3(a-k)^2] \\
= &(p-2) - \chi_2(k) - \chi_3(-k) + \sum\limits_{a \neq 0,k} \chi_2(a) \chi_3(a-k) - \chi_3(-k)^2 + \sum\limits_{a \neq 0,k} \chi_2(a) \chi_3(a-k)^2.
\end{align*}
We have used Lemma \ref{SecondLemmaOrderJ0} to replace the sums of single characters $\chi$ with the negative value $\chi(\pm k)$ that was omitted from the sum. Combining with the terms from before, this simplifies to
\begin{align*}
& p+1 + \sum\limits_{a \neq 0,k} \chi_2(a) \chi_3(a-k) + \sum\limits_{a \neq 0,k} \chi_2(a) \chi_3(a-k)^2 \\
= & p+1 + \chi_6(-1)^2 \sum\limits_{a \neq 0,k} \chi_2(a) \chi_3(k-a) + \chi_6(-1)^4 \sum\limits_{a \neq 0,k} \chi_2(a) \chi_3(k-a)^2 \\
= & p+1 + \chi_6(k)^{-1} \sum\limits_{a/k \neq 0,1} \chi_2(a/k)\chi_3((k-a)/k) + \chi_6(k)^{-1} \sum\limits_{a/k \neq 0,1} \chi_2(a/k)\chi_3((k-a)/k)^2 \\
= & p+1 + \chi_6(k)^{-1} J(\chi_2, \chi_3) + \chi_6(k)^{-1} J(\chi_2, \chi_3^2),
\end{align*}
where $J(\chi_2,\chi_3)$ os the Jacobi sum.

Note that if we write \[ \#E(\mathbb{F}_p) = p+1 - \alpha - \bar{\alpha}, \] then $\alpha = -\chi_6(k)^{-1}J(\chi_2,\chi_3)$, which is evaluated in \cite{BEW1}.

\section{Proof of Theorem \ref{6CycleMod7}}
\label{6CycleMod7Appendix}

Let $p_1 = a^2 + 3b^2$, with $a \equiv_3 -1$ and the sign on $b$ chosen such that $p_2 = p_1 + 1 + a - 3b$, and assume that $(p_1,p_2,p_3,p_4,p_5,p_6)$ is a proper elliptic cycle. By Corollary \ref{4ListTwice}, $a$ and $b$ determine the cycle uniquely. We enumerate over all the possible choices of $a$ and $b$ modulo $7$ in Table \ref{6CycleMod7Table}. In the last column, we take $\prod\limits_{i=1}^{6} p_i \mod 7$. If this is $0$, then at least one $p_i$ is not prime (since $7$ is not part of a proper elliptic cycle of length $6$). Only when $a \equiv_7 6 \equiv_7 -1$ and $b \equiv_7 0$ do we get a $1$ in the last column and the possibility of a non-trivial proper elliptic cycle.

\begin{tiny} \begin{table}[htbp]
  \centering
  \caption{Primes in Elliptic Cycles Modulo $7$}
    \begin{tabular}{|cc|cccccc|c|}
    \hline
    $a \mod 7$ & $b \mod 7$ & $p_1 \mod 7$ & $p_2 \mod 7$ & $p_3 \mod 7$ & $p_4 \mod 7$ & $p_5 \mod 7$ & $p_6 \mod 7$ & $\prod\limits_{i=1}^{6} p_i \mod 7$ \\
    \hline
    0     & 0     & 0     & 1     & 3     & 4     & 3     & 1     & 0 \\
    0     & 1     & 3     & 1     & 3     & 0     & 2     & 0     & 0 \\
    0     & 2     & 5     & 0     & 2     & 2     & 0     & 5     & 0 \\
    0     & 3     & 6     & 5     & 0     & 3     & 4     & 2     & 0 \\
    0     & 4     & 6     & 2     & 4     & 3     & 0     & 5     & 0 \\
    0     & 5     & 5     & 5     & 0     & 2     & 2     & 0     & 0 \\
    0     & 6     & 3     & 0     & 2     & 0     & 3     & 1     & 0 \\
    \hline
    1     & 0     & 1     & 3     & 0     & 2     & 0     & 3     & 0 \\
    1     & 1     & 4     & 3     & 0     & 5     & 6     & 2     & 0 \\
    1     & 2     & 6     & 2     & 6     & 0     & 4     & 0     & 0 \\
    1     & 3     & 0     & 0     & 4     & 1     & 1     & 4     & 0 \\
    1     & 4     & 0     & 4     & 1     & 1     & 4     & 0     & 0 \\
    1     & 5     & 6     & 0     & 4     & 0     & 6     & 2     & 0 \\
    1     & 6     & 4     & 2     & 6     & 5     & 0     & 3     & 0 \\
    \hline
    2     & 0     & 4     & 0     & 6     & 2     & 6     & 0     & 0 \\
    2     & 1     & 0     & 0     & 6     & 5     & 5     & 6     & 0 \\
    2     & 2     & 2     & 6     & 5     & 0     & 3     & 4     & 0 \\
    2     & 3     & 3     & 4     & 3     & 1     & 0     & 1     & 0 \\
    2     & 4     & 3     & 1     & 0     & 1     & 3     & 4     & 0 \\
    2     & 5     & 2     & 4     & 3     & 0     & 5     & 6     & 0 \\
    2     & 6     & 0     & 6     & 5     & 5     & 6     & 0     & 0 \\
    \hline
    3     & 0     & 2     & 6     & 0     & 4     & 0     & 6     & 0 \\
    3     & 1     & 5     & 6     & 0     & 0     & 6     & 5     & 0 \\
    3     & 2     & 0     & 5     & 6     & 2     & 4     & 3     & 0 \\
    3     & 3     & 1     & 3     & 4     & 3     & 1     & 0     & 0 \\
    3     & 4     & 1     & 0     & 1     & 3     & 4     & 3     & 0 \\
    3     & 5     & 0     & 3     & 4     & 2     & 6     & 5     & 0 \\
    3     & 6     & 5     & 5     & 6     & 0     & 0     & 6     & 0 \\
    \hline
    4     & 0     & 2     & 0     & 3     & 1     & 3     & 0     & 0 \\
    4     & 1     & 5     & 0     & 3     & 4     & 2     & 6     & 0 \\
    4     & 2     & 0     & 6     & 2     & 6     & 0     & 4     & 0 \\
    4     & 3     & 1     & 4     & 0     & 0     & 4     & 1     & 0 \\
    4     & 4     & 1     & 1     & 4     & 0     & 0     & 4     & 0 \\
    4     & 5     & 0     & 4     & 0     & 6     & 2     & 6     & 0 \\
    4     & 6     & 5     & 6     & 2     & 4     & 3     & 0     & 0 \\
    \hline
    5     & 0     & 4     & 3     & 1     & 0     & 1     & 3     & 0 \\
    5     & 1     & 0     & 3     & 1     & 3     & 0     & 2     & 0 \\
    5     & 2     & 2     & 2     & 0     & 5     & 5     & 0     & 0 \\
    5     & 3     & 3     & 0     & 5     & 6     & 2     & 4     & 0 \\
    5     & 4     & 3     & 4     & 2     & 6     & 5     & 0     & 0 \\
    5     & 5     & 2     & 0     & 5     & 5     & 0     & 2     & 0 \\
    5     & 6     & 0     & 2     & 0     & 3     & 1     & 3     & 0 \\
    \hline
    \textbf{6} & \textbf{0} & \textbf{1} & \textbf{1} & \textbf{1} & \textbf{1} & \textbf{1} & \textbf{1} & \textbf{1} \\
    6     & 1     & 4     & 1     & 1     & 4     & 0     & 0     & 0 \\
    6     & 2     & 6     & 0     & 0     & 6     & 5     & 5     & 0 \\
    6     & 3     & 0     & 5     & 5     & 0     & 2     & 2     & 0 \\
    6     & 4     & 0     & 2     & 2     & 0     & 5     & 5     & 0 \\
    6     & 5     & 6     & 5     & 5     & 6     & 0     & 0     & 0 \\
    6     & 6     & 4     & 0     & 0     & 4     & 1     & 1     & 0 \\
    \hline
    \end{tabular}%
  \label{6CycleMod7Table}%
\end{table} \end{tiny}

\section{Data for Asymptotic Evaluation of $C_d$}
\label{AsymptoticsData}

We conjecture that the number of elliptic pairs $(p,q)_d$ with $p \le q$ less than some given upper bound $X$ is asymptotic to $C_d \frac{X}{\log^2 X}$ as $X \rightarrow \infty$, with $C_d \asymp \frac{\sqrt{d}}{[h(-d)^2]^2}$ as $C_d \rightarrow \infty$. Of course, $C_d$ is bounded, and it is frequently very small, so this relation is computationally ineffective. The data, shown in Figure \ref{AsymptoticsGraph}, support this conjecture, however. The rest of this appendix consists of a table (Table \ref{AsymptoticsDataTable}) of the data we collected for values of $d$ with class number $h(-d) \le 4$.

\begin{tiny} \begin{table}[htbp]
  \centering
  \caption{$Y = $ the Number of Elliptic Pairs $(p,q)_d$ with $p \le q < X = 10^7$}
    \begin{tabular}{|cc|ccc|}
    \hline
    $d$ & $h(-d)$ & $\frac{\sqrt{d}}{[h(-d)]^2}$ & $Y$ & $\frac{Y}{X/\log^2 X}$ \\
    \hline
    3     & 1     & 1.732051 & 67619 & 1.756694 \\
    \hline
    11    & 1     & 3.316625 & 10125 & 0.263040 \\
    19    & 1     & 4.358899 & 21466 & 0.557672 \\
    43    & 1     & 6.557439 & 38158 & 0.991318 \\
    67    & 1     & 8.185353 & 49662 & 1.290184 \\
    163   & 1     & 12.76715 & 78517 & 2.039817 \\
    35    & 2     & 1.479020 & 4545  & 0.118076 \\
    51    & 2     & 1.785357 & 7054  & 0.183258 \\
    91    & 2     & 2.384848 & 12324 & 0.320169 \\
    115   & 2     & 2.680951 & 14274 & 0.370829 \\
    123   & 2     & 2.772634 & 12669 & 0.329132 \\
    187   & 2     & 3.418699 & 19490 & 0.506337 \\
    235   & 2     & 3.832427 & 21643 & 0.562270 \\
    267   & 2     & 4.085034 & 19062 & 0.495217 \\
    403   & 2     & 5.018715 & 29974 & 0.778704 \\
    427   & 2     & 5.165995 & 30647 & 0.796188 \\
    59    & 3     & 0.853461 & 2456  & 0.063805 \\
    83    & 3     & 1.012270 & 3238  & 0.084121 \\
    107   & 3     & 1.149342 & 3845  & 0.099890 \\
    139   & 3     & 1.309981 & 6503  & 0.168943 \\
    211   & 3     & 1.613982 & 8458  & 0.219733 \\
    283   & 3     & 1.869178 & 10504 & 0.272887 \\
    307   & 3     & 1.946824 & 10947 & 0.284395 \\
    331   & 3     & 2.021489 & 10676 & 0.277355 \\
    379   & 3     & 2.163102 & 11513 & 0.299100 \\
    499   & 3     & 2.482034 & 13267 & 0.344667 \\
    547   & 3     & 2.598670 & 15323 & 0.398081 \\
    643   & 3     & 2.817494 & 16295 & 0.423333 \\
    883   & 3     & 3.301702 & 20009 & 0.519820 \\
    907   & 3     & 3.346271 & 19969 & 0.518781 \\
    155   & 4     & 0.778119 & 2541  & 0.066013 \\
    195   & 4     & 0.872765 & 3510  & 0.091187 \\
    203   & 4     & 0.890488 & 3019  & 0.078432 \\
    219   & 4     & 0.924916 & 3678  & 0.095552 \\
    259   & 4     & 1.005842 & 5135  & 0.133404 \\
    291   & 4     & 1.066170 & 4255  & 0.110542 \\
    323   & 4     & 1.123263 & 3944  & 0.102462 \\
    355   & 4     & 1.177590 & 6290  & 0.163410 \\
    435   & 4     & 1.303541 & 5661  & 0.147069 \\
    483   & 4     & 1.373579 & 6076  & 0.157850 \\
    555   & 4     & 1.472402 & 6347  & 0.164891 \\
    595   & 4     & 1.524539 & 8283  & 0.215187 \\
    627   & 4     & 1.564998 & 7084  & 0.184037 \\
    667   & 4     & 1.614146 & 9325  & 0.242257 \\
    715   & 4     & 1.671218 & 9364  & 0.243270 \\
    723   & 4     & 1.680541 & 7687  & 0.199703 \\
    763   & 4     & 1.726403 & 10009 & 0.260027 \\
    795   & 4     & 1.762234 & 7752  & 0.201392 \\
    955   & 4     & 1.931442 & 10565 & 0.274471 \\
    1003  & 4     & 1.979386 & 11738 & 0.304945 \\
    1027  & 4     & 2.002928 & 11536 & 0.299697 \\
    1227  & 4     & 2.189285 & 10072 & 0.261664 \\
    1243  & 4     & 2.203513 & 13164 & 0.341992 \\
    1387  & 4     & 2.327653 & 14006 & 0.363866 \\
    1411  & 4     & 2.347705 & 12649 & 0.328612 \\
    1435  & 4     & 2.367587 & 13188 & 0.342615 \\
    1507  & 4     & 2.426256 & 14544 & 0.377843 \\
    1555  & 4     & 2.464593 & 14049 & 0.364983 \\
    \hline
    \end{tabular}%
  \label{AsymptoticsDataTable}%
\end{table} \end{tiny}

\vskip 10truept

\begin{flushleft}
 $^1$ \small {Department of Mathematics, Boise State University, Boise, ID 83725}\\
 $^2$ \small {Department of Mathematics, Statistics, and Physics, Wichita State University, Wichita, KS 67260}\\
 $^3$ \small {Department of Mathematics, University of Notre Dame, Notre Dame, IN 46556}\\
 $^4$ \small {Department of Mathematics, Washington University, St. Louis, MO 63130}\\
 $^5$ \small {Department of Mathematics and Computer Science, Colorado College, Colorado Springs, CO 80903}
\end{flushleft}

\end{document}